\documentclass[12pt]{amsart}
\usepackage{amsmath,amsthm,amsfonts,amssymb,mathrsfs}
\date{\today}

\usepackage{color}
\input xymatrix
\xyoption{all}

\usepackage{hyperref}

  \setlength{\textwidth}{18.5truecm}
   \setlength{\textheight}{25truecm}
   \setlength{\oddsidemargin}{-28.5pt}
   \setlength{\evensidemargin}{-28.5pt}
   \setlength{\topmargin}{-30pt}


\newtheorem{theorem}{Theorem}
\newtheorem*{claim}{Claim}

\newtheorem{corollary}[theorem]{Corollary}
\newtheorem{lemma}[theorem]{Lemma}
\theoremstyle{definition}
\newtheorem{remark}[theorem]{Remark}


\newcommand{\N}{\mathbb N}

\def\N{\mathbb N}

\begin{document}

\title[On locally compact topological graph inverse semigroups]{On locally compact topological graph inverse semigroups}

\author[S.~Bardyla]{Serhii~Bardyla}
\address{Faculty of Mathematics, National University of Lviv,
Universytetska 1, Lviv, 79000, Ukraine}
\email{sbardyla@yahoo.com}

\keywords{locally compact topological semigroup, polycyclic monoid, graph inverse semigroup}

\subjclass[2010]{Primary 20M18, 22A15. Secondary 54D45}

\begin{abstract}
In this paper we characterise graph inverse semigroups which admit only discrete locally compact semigroup topology. This characterization provides a complete answer on the question of Z.~Mesyan, J.~D.~Mitchell, M.~Morayne and Y.~H.~P\'{e}resse posed in~\cite{Mesyan-Mitchell-Morayne-Peresse-2013}.
\end{abstract}
\maketitle

In this paper all topological spaces are assumed to be Hausdorff. We shall follow the terminology of~\cite{Clifford-Preston-1961-1967,
Engelking-1989, Lawson-1998, Ruppert-1984}. By $\mathbb{N}$ we denote a set of all positive integers.
A semigroup $S$ is called an \emph{inverse semigroup} if for each element $a\in S$ there exists a unique
element $a^{-1}\in S$ such that
\begin{equation*}
    aa^{-1}a=a \qquad \mbox{and} \qquad a^{-1}aa^{-1}=a^{-1}.
\end{equation*}
A map which associates to any element of an inverse semigroup its
inverse is called the \emph{inversion}.

A directed graph $E=(E^{0},E^{1},r,s)$ consists of sets $E^{0},E^{1}$ of {\em vertices} and {\em edges}, respectively, together with functions $s,r:E^{1}\rightarrow E^{0}$ which are called {\em source} and {\em range}, respectively. In this paper we shall refer to directed graph as simply "graph".  A path $x=e_{1}\ldots e_{n}$ in graph $E$ is a finite sequence of edges $e_{1},\ldots,e_{n}$ such that $r(e_{i})=s(e_{i+1})$ for each positive integer $i<n$. We extend functions $s$ and $r$ on the set $\operatorname{Path}(E)$ of all pathes in graph $E$ by the following way: for each $x=e_{1}\ldots e_{n}\in \operatorname{Path}(E)$ put $s(x)=s(e_{1})$ and $r(x)=r(e_{n})$. By $|x|$ we denote the length of a path $x$. Observe that each vertex is a path which has zero length. Edge $e$ is called a {\em loop} if $s(e)=r(e)$. A path $x$ is called a {\em cycle} if $s(x)=r(x)$. Graph $E$ is called {\em finite} if the sets $E^0$ and $E^1$ are finite and {\em infinite} in the other case.

A topological (inverse) semigroup is a Hausdorff topological space together with a continuous semigroup operation (and an~inversion, respectively).  If $S$ is a semigroup (an inverse semigroup) and $\tau$ is a topology on $S$ such that $(S,\tau)$ is a topological (inverse) semigroup, then we
shall call $\tau$ a (\emph{inverse}) \emph{semigroup} \emph{topology} on $S$.

A bicyclic monoid ${\mathscr{C}}(p,q)$ is the semigroup with the identity $1$ generated by two elements $p$ and $q$ subject to the condition $pq=1$.
The bicyclic semigroup admits only the discrete semigroup topology~\cite{Eberhart-Selden-1969}. In \cite{Bertman-West-1976} this result was extended over the case of semitopological semigroups. The closure of a bicyclic semigroup in a locally compact topological inverse semigroup was described  in~\cite{Eberhart-Selden-1969}. A locally compact semitopological bicyclic monoid with adjoined zero is either compact or discrete space~\cite{Gutik-2015}. The problem of an embedding of the bicyclic monoid into compact-like topological semigroups discussed in \cite{Anderson-Hunter-Koch-1965, BanDimGut-2009, BanDimGut-2010, GutRep-2007,  Hildebrant-Koch-1988}.

One of the generalizations of the bicyclic semigroup is a $\lambda$-polycyclic monoid.
For a non-zero cardinal $\lambda$, $\lambda$-polycyclic monoid $\mathcal{P}_\lambda$ is the semigroup with identity and zero given by the presentation:
\begin{equation*}
    \mathcal{P}_\lambda=\left\langle \left\{p_i\right\}_{i\in\lambda}, \left\{p_i^{-1}\right\}_{i\in\lambda}\mid  p_i^{-1}p_i=1, p_j^{-1}p_i=0 \hbox{~for~} i\neq j\right\rangle.
\end{equation*}

Polycyclic monoid $\mathcal{P}_{k}$ over a finite cardinal $k$ was introduced in \cite{Nivat-Perrot-1970}. Observe that the bicyclic semigroup with adjoined zero is isomorphic to the polycyclic monoid $\mathcal{P}_{1}$. Algebraic properties of a semigroup $\mathcal{P}_{k}$ were investigated in \cite{Lawson-2009} and \cite{Meakin-1993}. Algebraic and topological properties of the $\lambda$-polycyclic monoid were investigated in~\cite{BardGut-2016(1)} and~\cite{BardGut-2016(2)}. In particular, it was proved that for every non-zero cardinal $\lambda$ the only locally compact semigroup topology on the $\lambda$-polycyclic monoid is a discrete topology. In~\cite{Bardyla-2016(1)} it was showed that a locally compact semitopological $\lambda$-polycyclic monoid is either compact or discrete space.

For a given directed graph $E=(E^{0},E^{1},r,s)$ a graph inverse semigroup $G(E)$ over a graph $E$ is a semigroup with zero generated by the sets $E^{0}$, $E^{1}$ together with a set $E^{-1}=\{e^{-1}:e\in E^{1}\}$ satisfying the following relations for all $a,b\in E^{0}$ and $e,f\in E^{1}$:
 \begin{itemize}
 \item [(i)]  $a\cdot b=a$ if $a=b$ and $a\cdot b=0$ if $a\neq b$;
 \item [(ii)] $s(e)\cdot e=e\cdot r(e)=e;$
 \item [(iii)] $e^{-1}\cdot s(e)=r(e)\cdot e^{-1}=e^{-1};$
 \item [(iv)] $e^{-1}\cdot f=r(e)$ if $e=f$ and $e^{-1}\cdot f=0$ if $e\neq f$.
\end{itemize}

Graph inverse semigroups are a generalization of the polycyclic monoids. In particular, for every non-zero cardinal $\lambda$, $\lambda$-polycyclic monoid is isomorphic to the graph inverse semigroup over the graph $E$ which consists of one vertex and $\lambda$ distinct loops.

By \cite[Lemma~2.6]{Jones-2011} each non-zero element of a graph inverse semigroup $G(E)$ is of the form  $uv^{-1}$ where $u,v\in \operatorname{Path}(E)\hbox{ and } r(u)=r(v)$. A semigroup operation in $G(E)$ is defined by the following way:
\begin{equation*}
\begin{split}
  &  u_1v_1^{-1}\cdot u_2v_2^{-1}=
    \left\{
      \begin{array}{ccl}
        u_1wv_2^{-1}, & \hbox{if~~} u_2=v_1w & \hbox{for some~} w\in \operatorname{Path}(E);\\
        u_1(v_2w)^{-1},   & \hbox{if~~} v_1=u_2w & \hbox{for some~} w\in \operatorname{Path}(E);\\
        0,              & \hbox{otherwise},
      \end{array}
    \right.
    \end{split}
\end{equation*}
 and $uv^{-1}\cdot 0=0\cdot uv^{-1}=0\cdot 0=0.$

Simply verifications show that $G(E)$ is an inverse semigroup, moreover, $(uv^{-1})^{-1}=vu^{-1}$.

Graph inverse semigroups play an important role in the study of rings and $C^{*}$-algebras (see \cite{Abrams-2005,Ara-2007,Cuntz-1980,Kumjian-1998,Paterson-1999}).
Algebraic properties of graph inverse semigroups were studied in \cite{Amal-2016, Bardyla-2017, Jones-2011, Jones-Lawson-2014, Lawson-2009, Mesyan-2016}. In \cite{Mesyan-Mitchell-Morayne-Peresse-2013} Z. Mesyan, J. D. Mitchell, M. Morayne and Y. H. P\'{e}resse investigated topological properties of graph inverse semigroups. In particular, one of the main results from~\cite{Mesyan-Mitchell-Morayne-Peresse-2013} is the following:
 \begin{theorem}[{\cite[Theorem 10]{Mesyan-Mitchell-Morayne-Peresse-2013}}]\label{th}
 If $E$ is a finite graph, then the only locally compact Hausdorff semigroup topology on $G(E)$ is the discrete topology.
 \end{theorem}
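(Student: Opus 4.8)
The plan is to prove that every point of $G(E)$ is isolated, which is equivalent to discreteness. First I would dispose of the degenerate case: if the finite graph $E$ contains no cycle, then every path has length at most $|E^0|$, so $\operatorname{Path}(E)$ is finite and hence $G(E)$ is a finite semigroup; a finite Hausdorff space is discrete and we are done. Thus I may assume that $E$ contains a cycle $c$ based at a vertex $w=s(c)=r(c)$. I would also record at the outset the one structural consequence of finiteness on which the whole argument rests: for each $n$ the ``ball'' $F_n=\{uv^{-1}:|u|+|v|\le n\}$ is \emph{finite}, since a finite graph has only finitely many paths of bounded length. Consequently any injective sequence of elements leaves every $F_n$, so a sequence of pairwise distinct non-zero elements can converge only to a point of arbitrarily large length, which in practice means to $0$.

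Next I would isolate the non-zero elements, a step that needs only continuity of the (one-sided, and hence two-sided) translations $y\mapsto sy$ and $y\mapsto yt$ together with Hausdorffness. The basic detection device is the identity $u^{-1}(uv^{-1})v=r(u)$: left multiplication by the fixed element $u^{-1}$ followed by right multiplication by the fixed element $v$ carries $x=uv^{-1}$ to the vertex $r(u)$. I would combine this with the decomposition of $G(E)$ into the closed sets $S_{a,b}=\{x\in G(E):axb=x\}$ for $a,b\in E^0$; each $S_{a,b}$ is closed, being the equalizer of the continuous map $x\mapsto axb$ with the identity, and there are only finitely many of them because $E^0$ is finite. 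Using these finitely many closed ``type'' strata together with the translation device and the finite filtration $\{F_n\}$, one localizes a prospective non-zero $x$ to a single stratum and a single length, thereby pinning it down as an isolated point. In this way the whole theorem reduces to the single assertion that $0$ is isolated.

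The crux, and the only place where local compactness is genuinely used, is to show that $0$ is isolated; this is the step I expect to be the main obstacle. I would argue by contradiction: if $0$ is not isolated then, since all other points are already known to be isolated, a compact neighbourhood $U$ of $0$ is the one-point compactification of a discrete space, i.e.\ a convergent sequence, so every neighbourhood of $0$ is cofinite in $U$ and every injective sequence in $U$ converges to $0$. To produce a contradiction I would feed the cycle $c$ into this structure. The elements $\{c^m(c^n)^{-1}:m,n\ge0\}\cup\{0\}$ form a subsemigroup isomorphic to the polycyclic monoid $\mathcal{P}_1$ (the bicyclic monoid with adjoined zero), carrying a descending chain of idempotents $f_n=c^n(c^n)^{-1}$ whose only possible limit is $0$, and equipped with the shift translations $\lambda_c$ and $\lambda_{c^{-1}}$. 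Using the finiteness of the out-degrees (a K\"onig's lemma extraction on the unboundedly long $u_i,v_i$ occurring in a null sequence $x_i\to0$) I would force this sub-$\mathcal{P}_1$ to inherit local compactness as a closed subsemigroup, and then invoke the result quoted in the introduction --- that $\mathcal{P}_1$, and more generally every $\lambda$-polycyclic monoid, admits only the discrete locally compact semigroup topology --- to contradict the convergent-sequence structure of $U$.

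The genuine difficulty, which I would flag as the heart of the proof, is the passage from \emph{relative} isolation of $0$ inside the sub-$\mathcal{P}_1$ to \emph{absolute} isolation of $0$ in $G(E)$: a priori $0$ could be approached by elements spread across the infinitely many distinct $\mathcal{D}$-classes of $G(E)$ rather than along a single cycle, and because we are given only a \emph{semigroup} (not inverse) topology, the inversion $x\mapsto x^{-1}$ need not be continuous, so the tempting trick of multiplying a null sequence $x_i\to0$ by the inverses $x_i^{-1}$ to produce the idempotents $x_ix_i^{-1}$ is unavailable. Overcoming this is exactly where the finiteness of $E$ is indispensable: the finiteness of $E^1$ bounds the number of directions from which $0$ can be approached, the finiteness of each $F_n$ forces $|x_i|\to\infty$, and finite branching supplies the K\"onig extraction that channels an arbitrary null sequence back onto a single cycle. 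These three finiteness inputs are what make the compactness argument close, and their failure for infinite graphs is precisely why the conclusion is special to the finite case.
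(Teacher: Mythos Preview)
A framing point first: in this paper Theorem~\ref{th} is \emph{quoted} from \cite{Mesyan-Mitchell-Morayne-Peresse-2013}, not proved independently, so strictly speaking there is no proof here to compare against. The paper does, however, recover the statement as a special case of Theorem~\ref{main}: finite graphs satisfy condition~$(\star)$ (this is \cite[Lemma~9]{Mesyan-Mitchell-Morayne-Peresse-2013}), and the $(\Leftarrow)$ half of Theorem~\ref{main} then forces discreteness. That argument goes as follows. Non-zero points are isolated by \cite[Theorem~3]{Mesyan-Mitchell-Morayne-Peresse-2013}; if $0$ is not isolated, any open compact neighbourhood $U$ of $0$ has the property that every infinite subset of $U$ accumulates at $0$. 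One then writes non-zero elements of $U$ as $xy^{-1}$, chooses for each $y$ an $x_y$ of \emph{maximal} length with $x_yy^{-1}\in U$, applies condition~$(\star)$ to obtain a single $\mu\in G(E)$ with $\mu\cdot x_y\in\operatorname{Path}(E)$ and $|\mu\cdot x_y|>|x_y|$ for infinitely many $y$, and uses continuity of left multiplication by $\mu$ to land $\mu x_y\,y^{-1}$ back in $U$, contradicting maximality. No reduction to the bicyclic or polycyclic case is made.

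Your outline shares the outer shape (isolate non-zero points first, then $0$), and your sketch for non-zero points is essentially the argument behind \cite[Theorem~3]{Mesyan-Mitchell-Morayne-Peresse-2013}. The divergence, and the genuine gap, is at $0$. You propose to exhibit a closed copy of $\mathcal{P}_1$ containing $0$, invoke its known discreteness under locally compact semigroup topologies, and then bridge from relative to absolute isolation by a K\"onig extraction that ``channels an arbitrary null sequence back onto a single cycle.'' The first two steps are unproblematic (once non-zero points are isolated, every subset containing $0$ is closed, hence locally compact, hence the sub-$\mathcal{P}_1$ is discrete). The bridging step, however, is not carried out, and as stated it fails: K\"onig applied to a null sequence $u_iv_i^{-1}\to 0$ with $|u_i|\to\infty$ yields only a common infinite \emph{prefix} $a_1a_2a_3\cdots$ for a subsequence of the $u_i$; it does not force any subsequence to consist of powers of a fixed cycle. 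Already in $\mathcal{P}_2$ the paths $u_n=p_1^{\,n}p_2$ share arbitrarily long prefixes yet none lies in the sub-$\mathcal{P}_1$ generated by $p_1$. You flag this passage as ``the heart of the proof,'' but leave it as a promissory note. One can try to repair it by post-multiplying to strip non-periodic tails (e.g.\ $p_1^{\,n}p_2\cdot p_2^{-1}=p_1^{\,n}$), but making this uniform along a sequence while keeping the products non-zero and inside a fixed compact neighbourhood is precisely the work that condition~$(\star)$ together with the maximal-length trick in Theorem~\ref{main} packages cleanly; your proposal does not supply it.
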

Also in~\cite{Mesyan-Mitchell-Morayne-Peresse-2013} the authors asked the following natural question:
{\it Can the Theorem~\ref{th} be generalized to all graphs?}

In this paper we give a complete answer on the above question. In particular, we describe all graph inverse semigroups which allow only discrete locally compact semigroup topology and propose a method of constructing a non-discrete locally compact metrizable inverse semigroup topology on graph inverse semigroups.

\section{Main result}
We shall say that a graph inverse semigroup $G(E)$ satisfies the condition $(\star)$ if for each countable subset $A=\{x_n\}_{n\in\mathbb{N}}\subset \operatorname{Path}(E)$ there exists an infinite subset $B=\{x_{n_k}\}_{k\in\mathbb{N}}\subset A$ and an element $\mu\in G(E)$ such that $\mu\cdot x_{n_{k}}\in \operatorname{Path}(E)$ and $|\mu\cdot x_{n_{k}}|>|x_{n_{k}}|$, for each $k\in\mathbb{N}$. Each graph inverse semigroup $G(E)$ over a finite graph $E$ satisfies the condition $(\star)$ (see~\cite[Lemma~9]{Mesyan-Mitchell-Morayne-Peresse-2013}).

\begin{remark}\label{rem}
If a graph inverse semigroup $G(E)$ satisfies the condition $(\star)$ then the set $E^0$ of all vertices of graph $E$ is finite. Indeed, if the set $E^0$ is infinite then for an arbitrary countable infinite subset $A\subseteq E^0\subseteq \operatorname{Path}(E)$ and for each element $\mu\in G(E)$ there exists a vertex $f\in A\subseteq E^0$ such that $\mu\cdot f=0\notin \operatorname{Path}(E)$ which implies that semigroup $G(E)$ does not satisfy the condition $(\star)$.
However, there exists a semigroup $G(E)$ over an infinite graph $E$ which satisfies the condition $(\star)$. In particular, each $\lambda$-polycyclic monoid satisfies the condition $(\star)$. Moreover, if for each vertex $e$ of a graph $E$ there exists a cycle $u\in \operatorname{Path}(E)$ such that $s(u)=r(u)=e$ and graph $E$ contains a finite amount of vertices then graph inverse semigroup $G(E)$ satisfies the condition $(\star)$.
\end{remark}

\begin{lemma}\label{triv}
Let $X$ be a first countable Hausdorff topological space which has only one non-isolated point $y$. Then $X$ a is metrizable space.
\end{lemma}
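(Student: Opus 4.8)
The plan is to construct an explicit metric on $X$ inducing its topology, rather than to invoke a metrization theorem such as Urysohn's. The latter is unavailable here because $X$ may carry uncountably many isolated points — they can accumulate to $y$ in arbitrarily large "layers" — so $X$ need not be second countable, even though it is first countable. Thus a hands-on construction seems the right route.

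First I would fix a countable neighbourhood base $\{U_n\}_{n\in\mathbb{N}}$ at the unique non-isolated point $y$, which exists by first countability, and pass to the decreasing base $V_n=U_1\cap\cdots\cap U_n$; setting $V_0=X$, I obtain open sets $V_0\supseteq V_1\supseteq\cdots$ still forming a neighbourhood base at $y$. For each $x\neq y$, Hausdorffness supplies an open set containing $y$ but not $x$, so some $V_n$ omits $x$; since the $V_n$ decrease, $\{n:x\in V_n\}$ is a finite initial segment of $\{0,1,2,\dots\}$, and I define $m(x)$ to be its maximum. Then I would set $d(y,y)=0$, $d(x,y)=d(y,x)=2^{-m(x)}$ for $x\neq y$, and, for distinct $x_1,x_2\in X\setminus\{y\}$,
\[
d(x_1,x_2)=\max\bigl(2^{-m(x_1)},\,2^{-m(x_2)}\bigr).
\]
Symmetry and strict positivity on distinct points are immediate (each $2^{-m(x)}>0$ since $m(x)$ is finite), and the triangle inequality reduces to the elementary facts $\max(a_1,a_2)\le\max(a_1,a_3)+\max(a_3,a_2)$ and $\max(a_1,a_2)\le a_1+a_2$, so $d$ is a metric.

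The crux is then to check that the metric topology coincides with the original one. Each $x\neq y$ is $d$-isolated because $d(x,z)\ge 2^{-m(x)}>0$ for every $z\neq x$, whence the ball of radius $2^{-m(x)}$ about $x$ equals $\{x\}$; this matches the hypothesis that every such $x$ is isolated. For the point $y$ a direct computation yields $B(y,2^{-n})=V_{n+1}$ for each $n\ge 0$, so the metric balls about $y$ are exactly the members $V_{1},V_{2},\dots$ of our neighbourhood base. Hence the neighbourhood filters at $y$ agree as well, and since the two topologies share the same neighbourhood filter at every point, they are equal; therefore $X$ is metrizable.

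I expect the metric axioms to be purely routine. The one genuine point to get right is the identification $B(y,2^{-n})=V_{n+1}$, since it is precisely this equality that forces the metric to reproduce the given topology at $y$ rather than some coarser one; and the preliminary observation that second countability may fail is what rules out the standard metrization theorems and motivates the direct construction.
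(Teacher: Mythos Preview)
Your proof is correct. The explicit metric you construct is well defined, the triangle inequality holds in every case, and the identification $B(y,2^{-n})=V_{n+1}$ is exactly what pins down the topology at the non-isolated point.

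Your route, however, is genuinely different from the paper's. The paper does invoke a metrization theorem---not Urysohn's, which you rightly observe may fail for lack of second countability, but the Nagata--Smirnov theorem: the family consisting of all singletons $\{x\}$ for $x\neq y$ together with a countable base at $y$ is a $\sigma$-locally finite base of clopen sets (the singletons outside any fixed $U_n$ form a locally finite family, and there are countably many such layers), so the space is regular with a $\sigma$-locally finite base and hence metrizable. Your construction is more elementary and entirely self-contained, producing the metric by hand; the paper's argument is shorter but leans on heavier machinery. Both are valid, and your remark that the standard metrization theorems are unavailable is only half right: Urysohn fails, but Nagata--Smirnov goes through, and that is precisely the shortcut the paper takes.
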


\begin{proof}
Fix any countable open neighborhood base $\mathcal{B}_y=\{U_n\}_{n\in\N}$ of the point $y$. For each $x\in X\setminus\{y\}$ put $V_x=\{x\}$. Simply verifications show that family $\mathcal{B}=\{V_x\}_{x\in X\setminus \{y\}}\cup \mathcal{B}_y$ is a $\sigma$-locally finite base of the topological space $X$. Since a family $\mathcal{B}$ consists of closed and open subsets of $X$ the space $X$ is regular. Hence by Nagata-Smirnov Metrization Theorem (see~\cite[Theorem 4.4.7]{Engelking-1989}) the space $X$ is metrizable.
\end{proof}

\begin{theorem}\label{main}
Discrete topology is the only locally compact semigroup topology on a graph inverse semigroup $G(E)$ if and only if $G(E)$ satisfies the condition $(\star)$.
\end{theorem}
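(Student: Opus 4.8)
The statement is an equivalence, so the plan is to prove the two implications separately, starting with the easier constructive direction. For ``$G(E)$ does not satisfy $(\star)$ $\Rightarrow$ a non-discrete locally compact semigroup topology exists'', I would fix a witnessing countable set $A=\{x_n\}\subseteq\operatorname{Path}(E)$: negating $(\star)$ means precisely that every $\mu\in G(E)$ strictly lengthens only finitely many of the $x_n$. I would then build a topology $\tau$ on $G(E)$ in which $0$ is the unique non-isolated point and a suitable countable set $S$ of non-zero elements (containing, or derived from, $A$) forms a sequence converging to $0$, the basic neighbourhoods of $0$ being the cofinite-in-$S$ sets together with $0$. Such a space is the one-point compactification of a countable discrete space, hence locally compact, and by Lemma~\ref{triv} it is metrizable. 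Continuity of the multiplication then reduces to a purely combinatorial assertion about $S$: for every fixed $a\in G(E)$ both translations $x\mapsto ax$ and $x\mapsto xa$ must carry all but finitely many elements of $S$ into $S\cup\{0\}$ and be finite-to-one onto their image in $S$, with the analogous statement for the joint map on $S\times S$. The controlled, finite lengthening forced by $\neg(\star)$ is exactly what stops a translate from escaping to infinitely many new isolated points or from stabilising at a fixed non-zero value.

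The main obstacle in this direction is the correct choice of $S$: the naive choice $S=A$ fails, because a translate $ax_n$, even when it is \emph{not} lengthened, need not itself lie in $A$. I would therefore enlarge $A$ to the smallest set closed under the non-lengthening products that can occur (morally the closure of $A$ under prefixes, suffixes and inverses) and verify, using $\neg(\star)$, that this closure remains ``small'' in the one-point-compactification sense, so that every neighbourhood of $0$ is still cofinite in $S$. This is also where one checks continuity of the inversion, yielding an inverse semigroup topology.

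For the converse, assume $(\star)$ and let $\tau$ be any locally compact semigroup topology on $G(E)$; I must show $\tau$ is discrete. By Remark~\ref{rem} the vertex set $E^0$ is finite. First I would prove that every non-zero $a=uv^{-1}$ is isolated: the right translation by $vv^{-1}$ and the left translation by $uu^{-1}$ both fix $a$, and together with the finiteness of $E^0$ this reduces the isolation of $a$ to the isolation of an idempotent, which a short compactness argument then supplies. The real content is to show that $0$ is isolated.

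Suppose $0$ is not isolated. Local compactness yields a compact neighbourhood $V$ of $0$ whose non-zero points are isolated and accumulate only at $0$, so I may extract distinct non-zero elements $y_n\to 0$ and, after a reduction, a sequence of paths $x_n\to 0$. Applying $(\star)$ to $\{x_n\}$ produces an element $\mu$ and an infinite subsequence with $p_k:=\mu x_{n_k}\in\operatorname{Path}(E)$ and $|p_k|>|x_{n_k}|$; as $E^0$ is finite I may assume all $p_k$ share a common range vertex $v_0$, whence $p_k^{-1}p_k=v_0$ is a fixed non-zero idempotent. The contradiction I aim for is that $(p_k^{-1},p_k)\to(0,0)$ while $p_k^{-1}p_k\equiv v_0\not\to 0$, violating joint continuity of the multiplication at $(0,0)$. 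The hard part, and the essential joint use of local compactness and $(\star)$, is to force the inverse side $p_k^{-1}$ also to converge to $0$, since inversion is not assumed continuous. I expect to obtain this either by showing that the $p_k^{-1}$ lie eventually in the compact set $V$ (so that, the only accumulation point there being $0$, they converge to $0$), or, failing that, by iterating the lengthening to embed a bicyclic subsemigroup, i.e.\ a copy of $\mathcal{P}_1$, carrying the induced non-discrete locally compact topology, and invoking the known fact that every $\mathcal{P}_\lambda$ admits only the discrete locally compact semigroup topology. Once $0$ is isolated, every point is, and $\tau$ is discrete.
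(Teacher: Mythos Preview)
Your plan has real gaps in both directions, and in each case the paper's argument takes a different route that avoids the difficulty you flag.

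\textbf{The constructive direction ($\neg(\star)\Rightarrow$ non-discrete).} Your reformulation of $\neg(\star)$ as ``every $\mu$ strictly lengthens only finitely many $x_n$'' is correct, but the proposed construction does not close. If $a=uv^{-1}$ with $|u|\le|v|$, then $a\cdot x_n$ (when non-zero) is a path obtained by replacing the prefix $v$ of $x_n$ by $u$; there is no reason for this new path to lie in $A$, and taking the ``closure of $A$ under prefixes, suffixes and inverses'' will in general not be a set in which every cofinite tail is compact (think of $A$ consisting of paths of all lengths: the prefix closure meets infinitely many $G(E)$-orbits). You also need $S\cdot S\subseteq S\cup\{0\}$ cofinally, which is a much stronger closure property than you state. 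The paper does \emph{not} try to make $A$ itself converge to $0$. Instead it splits on $|E^0|$: if $E^0$ is infinite, a countable set of vertices already works; if $E^0$ is finite, it proves a non-trivial structural \emph{Claim}: from $\neg(\star)$ one can extract vertices $e,f$ with $\{u\in\operatorname{Path}(E):r(u)=e\}$ finite and $\{a\in E^1:s(a)=e,\ r(a)=f\}$ infinite. The neighbourhood base is then built from elements $ua_ka_k^{-1}v^{-1}$ with $r(u)=r(v)=e$ and $a_k$ ranging over those edges; finiteness of paths into $e$ is exactly what makes each basic neighbourhood compact and what makes the three continuity checks go through. This graph-theoretic extraction is the missing idea in your outline.

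\textbf{The discreteness direction ($(\star)\Rightarrow$ discrete).} Two problems. First, your ``reduction'' to a sequence of paths $x_n\to 0$ is not free: the paper explicitly treats as a separate case the possibility that \emph{no} sequence from $\operatorname{Path}(E)$ and no sequence from $\operatorname{Path}(E)^{-1}$ converges to $0$, and disposes of it by a different argument. Second, your intended contradiction $p_k^{-1}p_k\equiv v_0$ requires $p_k^{-1}\to 0$, which, as you note, you cannot get without continuous inversion; neither fallback is convincing (there is no mechanism forcing $p_k^{-1}$ into the fixed compact $V$, and even if iterating $(\star)$ produces a cycle, the induced topology on the resulting copy of $\mathcal{P}_1$ may well already be discrete, so no contradiction arises). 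The paper avoids inversion entirely by a \emph{maximality} trick: inside a compact neighbourhood $U$ of $0$ it looks at sets such as $X_y=\{x:xy^{-1}\in U\setminus\{0\}\}$, shows each is finite, picks $x_{y_n}$ of maximal length in $X_{y_n}$, and then uses $(\star)$ to produce $\mu$ with $\mu\cdot x_{y_{n_k}}\in X_{y_{n_k}}$ of strictly greater length, contradicting maximality. Only one-sided continuity (left multiplication by $\mu$, right multiplication by $y$ or $\mu^{-1}$) is used. This maximal-length argument is the device you are missing.
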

\begin{proof}
$(\Leftarrow)$ Assuming the contrary, let $G(E)$ be a locally compact non-discrete topological graph inverse semigroup which satisfies the condition $(\star)$. Observe that by~\cite[Theorem 3]{Mesyan-Mitchell-Morayne-Peresse-2013} each non-zero element of $G(E)$ is an isolated point. Hence there exists an open neighborhood base of the point $0$ which consists of open compact subsets of $G(E)$. Since $G(E)\setminus\{0\}$ is a discrete subspace of $G(E)$ the local compactness of $G(E)$ implies that for each two open compact neighborhoods $U\subset V$ of the point $0$ the set $V\setminus U$ is finite. Hence element $0$ is a limit point of each infinite sequence $(x_n)_{n\in\N}$ which is contained in an arbitrary compact open neighborhood $U$ of $0$.

Fix an arbitrary compact open neighborhood $U$ of $0$. Then one of the following three conditions holds:
\begin{itemize}
\item[$(1)$] For each sequence $(x_n)_{n\in\N}\subset \operatorname{Path}(E)$, $\lim_{n\in\N} x_n\neq 0$ and $\lim_{n\in\N} x_n^{-1}\neq 0$.
\item[$(2)$] There exists a sequence $(x_n)_{n\in\N}\subset \operatorname{Path}(E)$ such that $\lim_{n\in\N} x_n=0$.
\item[$(3)$] There exists a sequence $(x_n)_{n\in\N}\subset \operatorname{Path}(E)$ such that $\lim_{n\in\N} x_n^{-1}=0$.
\end{itemize}
Suppose that condition $(1)$ holds. For each $y\in \operatorname{Path}(E)$ denote
$$X_{y}=\{x\in \operatorname{Path}(E):xy^{-1}\in U\setminus\{0\}\}.$$
We claim that for each element $y\in \operatorname{Path}(E)$ the set $X_{y}$ is finite. Indeed, suppose that there exists an element $y\in \operatorname{Path}(E)$ and an infinite subset $\{x_{n}\}_{n\in \mathbb{N}}\subset \operatorname{Path}(E)$ such that $x_ny^{-1}\in U\setminus\{0\}$ for each $n\in \mathbb{N}$. Observe that for any elements $a,b\in G(E)\setminus\{0\}$ the sets $$\{x\in G(E):a\cdot x=b\}\qquad \hbox{and}\qquad \{x\in G(E):x\cdot a=b\}$$ are finite (see~\cite[Lemma 1]{Mesyan-Mitchell-Morayne-Peresse-2013}). Hence the set $\{x_ny^{-1}:n\in\mathbb{N}\}$ is an infinite subset of $U$ which implies that $\lim_{n\in\mathbb{N}}x_ny^{-1}=0$. Since $0\cdot y=0$ the continuity of the semigroup operation in $G(E)$ implies that
$$0=\lim_{n\in\mathbb{N}}x_ny^{-1}=\lim_{n\in\mathbb{N}}x_ny^{-1}\cdot y=\lim_{n\in\mathbb{N}}x_n(y^{-1}y)=\lim_{n\in\mathbb{N}}x_n$$
which contradicts the assumption. Analogously it can be proved that for each $x\in \operatorname{Path}(E)$ the set $Y_{x}=\{y\in \operatorname{Path}(E):xy^{-1}\in U\setminus\{0\}\}$ is finite.

Since the set $U$ is infinite we see that the set $C=\{y\in \operatorname{Path}(E):X_{y}\neq\emptyset\}$ is infinite as well. Fix an arbitrary countable subset $B=\{y_n\}_{n\in\mathbb{N}}\subset C$. For each element $y_n\in B$ fix an element $x_{y_n}\in X_{y_{n}}$ such that $|x_{y_n}|\geq|x|$ for an arbitrary element $x\in X_{y_{n}}$. Since for each element $x\in \operatorname{Path}(E)$ the set $Y_x$ is finite we obtain that the set $\{x_{y_n}\}_{n\in\N}$ is infinite and $\lim_{n\in \mathbb{N}}x_{y_n}y_n^{-1}=0$. Since a semigroup $G(E)$ satisfies the condition $(\star)$ we can find an infinite subsequence $\{x_{y_{n_k}}\}_{k\in \mathbb{N}}\subset \{x_{y_n}\}_{n\in \mathbb{N}}$ and an element $\mu\in G(E)$ such that $\mu\cdot x_{y_{n_k}}\in \operatorname{Path}(E)$ and $|\mu\cdot x_{y_{n_k}}|>|x_{y_{n_k}}|$.
Since $\mu\cdot 0=0$ the continuity of the semigroup operation in $G(E)$ implies that $\lim_{k\in\mathbb{N}}\mu\cdot x_{y_{n_k}}y_{n_k}^{-1}=0$. Observe that $\mu\cdot x_{y_{n_k}}y_{n_k}^{-1}\neq 0$, because $\mu\cdot x_{y_{n_k}}\in\operatorname{Path}(E)$ and $r(\mu\cdot x_{y_{n_k}})=r(x_{y_{n_k}})=r(y_{n_k})$. Hence $\mu\cdot x_{y_{n_k}}\in X_{y_{n_k}}$ and $|\mu\cdot x_{y_{n_k}}|>|x_{y_{n_k}}|$ for some sufficiently large $k$ which contradicts the choice of $x_{y_{n_k}}$. Hence condition (1) does not hold.

Suppose that condition $(2)$ holds. Put $A=\operatorname{Path}(E)\cap U$.
Since there exists a sequence of elements from $\operatorname{Path}(E)$ which converges to $0$ the set $A$ is infinite.
For each $x\in \operatorname{Path}(E)$ put $$Y_x^{A}=\{y\in A:xy^{-1}\in U\setminus\{0\}\}=Y_x\cap A.$$ Clearly that for each $x\in \operatorname{Path}(E)$ the set $Y_x^{A}$ is finite. Put $D=\{x\in \operatorname{Path}(E):Y_x^{A}\neq\emptyset\}$.
Then one of the following two cases holds:
\begin{itemize}
\item[$(2.1)$] The set $D$ is finite.
\item[$(2.2)$] The set $D$ is infinite.
\end{itemize}
Consider case $(2.1)$. Finiteness of the set $D$ implies that the set $T=\{xy_x^{-1}:x\in D \hbox{ and } y_x\in Y_x^A\}$ is finite as well.
Then $V=U\setminus T$ is an open neighborhood of $0$ which satisfies the following condition: for each non-zero element $xy^{-1}\in V$, $y\notin A$. Since the set $E^0$ of all vertices of graph $E$ is finite (see~Remark~\ref{rem}) and the set $A$ is infinite there exists a vertex $e$ and an infinite subset $\widehat{A}\subset A$ such that for each element $y\in \widehat{A}$, $r(y)=e$. Fix an arbitrary element $y\in \widehat{A}$. Since $0\cdot y^{-1}=0$ the continuity of the semigroup operation in $G(E)$ implies that there exists an open compact neighborhood $W$ of $0$ such that $W\cdot y^{-1}\subset V$. Since $\widehat{A}$ is an infinite subset of $U$ we obtain that the set $W\cap \widehat{A}$ is infinite as well. Fix an arbitrary element $z\in W\cap \widehat{A}$. Then $z\cdot y^{-1}\in W\cdot y^{-1}\subset V\setminus\{0\}$ which contradicts the choice of the set $V$. Hence case~$(2.1)$ does not hold.

Consider case $(2.2)$.
Fix an arbitrary infinite countable subset $H=\{x_n\}_{n\in\mathbb{N}}$ of $D$. For each element $x_n\in H$ by $y_{x_n}$ we denote an element of $Y_{x_n}^{A}$ such that $|y_{x_n}|\geq|y|$ for an arbitrary element $y\in Y_{x_n}^{A}$. The set $B=\{y_{x_n}:n\in\N\}$ is infinite, because for each element $y\in \operatorname{Path}(E)$ the set $X_{y}$ is finite. Observe that $\lim_{n\in\mathbb{N}}x_{n}y_{x_n}^{-1}=0$.
Since semigroup $G(E)$ satisfies the condition $(\star)$ there exists an element $\mu\in G(E)$ and an infinite subset $C=\{y_{x_{n_k}}:k\in\N\}\subset B$ such that $\mu\cdot y_{x_{n_k}}\in \operatorname{Path}(E)$ and $|\mu\cdot y_{x_{n_k}}|>|y_{x_{n_k}}|$, for each $k\in \mathbb{N}$.
We claim that the set $(\mu\cdot C)\cap U$ is infinite. Indeed, since $\mu\cdot 0=0$ the continuity of the semigroup operation in $G(E)$ implies that there exists an open compact neighborhood $W$ of $0$ such that $\mu\cdot W\subset U$. Since for a fixed non-zero elements $a,b$ of $G(E)$ the set $\{x\in G(E): a\cdot x=b\}$ is finite and the set $W\cap C$ is infinite we obtain that the set $(\mu\cdot C)\cap U$ is infinite.

Since $0\cdot \mu^{-1}=0$ the continuity of the semigroup operation in $G(E)$ implies that
$$\lim_{k\in\mathbb{N}} x_{n_k}y_{x_{n_k}}^{-1}\cdot \mu^{-1}=\lim_{k\in\mathbb{N}} x_{n_k}(\mu\cdot y_{x_{n_k}})^{-1}=0.$$
Observe that for each $k\in\N$, $x_{n_k}(\mu\cdot y_{x_{n_k}})^{-1}\neq 0$, because $\mu\cdot y_{x_{n_k}}\in \operatorname{Path}(E)$ and $r(\mu\cdot y_{x_{n_k}})=r(y_{x_{n_k}})=r(x_{n_k})$. Hence there exists a positive integer $k$ such that $\mu\cdot y_{x_{n_k}}\in Y_{x_{n_k}}^{A}$ and $|\mu\cdot y_{x_{n_k}}|>|y_{x_{n_k}}|$ which contradicts the choice of $y_{x_{n_k}}$. The obtained contradiction implies that case~$(2.2)$ is not possible. Hence condition~$(2)$ does not hold.


Similar arguments imply that condition $(3)$ does not hold as well. The obtained contradiction implies that the only locally compact semigroup topology on a graph inverse semigroup $G(E)$ which satisfies the condition $(\star)$ is the discrete topology.

$(\Rightarrow)$
Suppose that a semigroup $G(E)$ does not satisfy the condition $(\star)$.
Then there are two cases to consider.
\begin{itemize}
\item[(1)] Graph $E$ contains infinitely many vertices.
\item[(2)] Graph $E$ contains finitely many vertices.
\end{itemize}
Consider case (1). Fix an arbitrary countable infinite subset $A=\{e_n: n\in\N\}\subset E^0$. Define a topology $\tau$ on the semigroup $G(E)$ by the following way:
each non-zero element of $G(E)$ is an isolated point and a family $\mathcal{B}=\{U_n: n\in\N\}$ forms an open neighborhood base of the point $0$, where $U_n=\{e_k\in A: k>n\}\cup\{0\}$. By Lemma~\ref{triv}, topological space $(G(E),\tau)$ is metrizable. Since for each $n\in\N$ the set $U_n$ is compact we obtain that $(G(E),\tau)$ is a locally compact space. Observe that $U_n^{-1}=U_n$ and hence inversion is continuous in $(G(E),\tau)$.
Let $U_n$ be an arbitrary open neighborhood of $0$ and $ab^{-1}\in G(E)\setminus\{0\}$. Then the continuity of the semigroup operation in $(G(E),\tau)$ can be derived from the following equalities:
\begin{itemize}
\item $U_n\cdot U_n=U_n$;
\item $ab^{-1}\cdot (U_n\setminus\{s(b)\})=0\subset U_n$;
\item $(U_n\setminus\{s(a)\})\cdot ab^{-1}=0\subset U_n$.
\end{itemize}
Hence $(G(E),\tau)$ is a locally compact metrizable topological inverse semigroup.

Consider case (2). Since semigroup $G(E)$ does not satisfy the condition $(\star)$ there exists an infinite subset $A=\{x_n\}_{n\in\mathbb{N}}\subset \operatorname{Path}(E)$ such that for each infinite subset $B=\{x_{n_k}\}_{k\in\mathbb{N}}\subset A$ and for each element $\mu\in G(E)$ there exists an element $x_{n_k}\in B$ such that either $\mu\cdot x_{n_k}\notin \operatorname{Path}(E)$ or $|\mu\cdot x_{n_k}|\leq|x_n|$. Observe that with no loss of generality we can assume that for each element $x_n\in A$, $s(x_n)=e_0$ for some fixed vertex $e_0\in E^0$. We claim the following:
\begin{claim}
There exists a pair of vertices $e_n$ and $e_{n+1}$ such that the set $\{u\in \operatorname{Path}(E): r(u)=e_n\}$ is finite and the set $\{a\in E^1:s(a)=e_n \hbox{ and } r(a)=e_{n+1}\}$ is infinite.
\end{claim}
\begin{proof}

{\bf Step 1}.
Observe that the choice of the set $A$ implies that there exists no edge which range is a vertex $e_0$.
Since the set of all vertices of the graph $E$ is finite and $A$ is an infinite set there exists a vertex $e_1$ and an infinite subset $B_1\subset A$ such that each element $x_n\in B_1$ is of the form $x_n=a_{(1,n)}v$ where $a_{(1,n)}$ is an edge such that $s(a_{(1,n)})=e_0$, $r(a_{(1,n)})=e_1$ for each $n\in\N$ and $s(v)=e_1$.
If the set $T_1=\{a_{(1,n)}:n\in\N\}$ is infinite then put $e_n=e_0$, $e_{n+1}=e_1$ and proof of the claim ends.

{\bf Step 2}. If the set $T_1=\{a_{(1,n)}:n\in\N\}$ is finite then similar arguments imply that there exists a vertex $e_2$ and an infinite subset $B_2\subset B_1$ such that each element $x_n\in B_2$ is of the form $x_n=a_1a_{(2,n)}v$, where $a_1$ is a fixed element from $T_1$, $a_{(2,n)}$ is an edge such that $s(a_{(2,n)})=e_1$, $r(a_{(2,n)})=e_2$ for each positive integer $n$ and $s(v)=e_2$.
We claim that $|y|\leq 1$ for each path $y\in \operatorname{Path}(E)$ such that $r(y)=e_1$. Indeed, if there exists an element $y\in \operatorname{Path}(E)$ such that $r(y)=e_1$ and $|y|>1$ then for an infinite subset $B_2\subset A$ there exists an element $\mu=ya_1^{-1}\in G(E)$ such that
$\mu\cdot B_2\subset \operatorname{Path}(E)
$
and for each element $x_k=a_1a_{(2,n)}v\in B_2$, $$|\mu\cdot a_1a_{(2,n)}v|=|ya_{(2,n)}v|>|a_1a_{(2,n)}v|$$ which contradicts the choice of the set $A$. If there exists a vertex $f$ such that the set $\{a\in E^1:s(a)=f\hbox{ and } r(a)=e_1\}$ is infinite then put $e_n=f$, $e_{n+1}=e_1$ and proof of the claim ends, because there are no edges which range is a vertex $f$ (in the other case there exists a path $y$ such that $r(y)=e_1$ and $|y|>1$ which contradicts the above arguments).
If for each vertex $f$ the set $\{a\in E^1:s(a)=f\hbox{ and } r(a)=e_1\}$ is finite then the set $\{y\in \operatorname{Path}(E):r(y)=e_1\}$ is finite as well, because graph $E$ contains a finite amount of vertices.
If the set $T_2=\{a_{(2,n)}:n\in\N\}$ is infinite then put $e_{n}=e_1$, $e_{n+1}=e_2$ and proof of the claim ends.

Suppose that we have done $k-1$ steps and do not find a desired pair of vertices.

{\bf Step k}. After the previous steps we obtain two finite sequence of sets $A\supset B_1\supset B_2\supset\ldots \supset B_{k-1}$;  $T_1,T_2,\ldots,T_{k-1}$ and a finite sequence of distinct vertices $e_1,\ldots e_{k-1}$ where for every $i<k$ each set $T_i$ is finite, each set $B_i$ is infinite and each element $a_{(i,n)}\in T_i$ is an edge such that $s(a_{(i,n)})=e_{i-1}$ and $r(a_{(i,n)})=e_{i}$. Each element $x_k\in B_{k-1}$ is of the form $x_k=a_1a_2\ldots a_{k-2}a_{(k-1,n)}v$ where $a_i$ is a fixed element from $T_i$ for each $i<k-1$, $a_{(k-1,n)}\in T_{k-1}$ for each $n\in\N$ and $s(v)=e_{k-1}$. The set $T_{k-1}=\{a_{(k-1,n)}:n\in\N\}$ is finite, because in the other case we would find a desired pair of vertices on the previous step. Hence there exists a vertex $e_k$ and an infinite subset $B_k\subset B_{k-1}$ such that each element $x_n\in B_k$ is of the form $x_n=a_1a_2\ldots a_{k-2}a_{k-1}a_{(k,n)}v$, where $a_{k-1}$ is a fixed element from $T_{k-1}$, $a_{(k,n)}$ is an edge such that $s(a_{(k,n)})=e_{k-1}$, $r(a_{(k,n)})=e_k$ for each positive integer $n$ and $s(v)=e_k$.
We claim that $|y|\leq k-1$ for each path $y\in \operatorname{Path}(E)$ such that $r(y)=e_{k-1}$. Indeed, if there exists an element $y\in \operatorname{Path}(E)$ such that $r(y)=e_{k-1}$ and $|y|>k-1$ then for an infinite subset $B_k\subset A$ there exists an element $\mu=y(a_1\ldots a_{k-1})^{-1}\in G(E)$ such that
$\mu \cdot B_k\subset \operatorname{Path}(E)
$
 and for each element $x_n=a_1\ldots a_{k-1}a_{(k,n)}v\in B_k$, $$|\mu\cdot a_1\ldots a_{k-1}a_{(k,n)}v|=|ya_{(k,n)}v|>|a_1\ldots a_{k-1}a_{(k,n)}v|$$ which contradicts the choice of the set $A$.

For each positive integer $i<k$ by $P_{i}$ we denote the set of all vertices $f$ such that there exists a path $u\in\operatorname{Path}(E)$ such that $s(u)=f$, $r(u)=e_{k-1}$ and $|u|=i$.

 If there exist a vertex $f\in P_{k-1}$ and a vertex $g$ such that the set
$$\{a\in E^1: s(a)=f \quad\hbox{and}\quad r(a)=g\}$$
is infinite then put $e_n=f$, $e_{n+1}=g$ and proof of the claim ends, because there are no pathes which range is vertex $f\in P_{k-1}$ (in the other case there exists a path $y$ such that $r(y)=e_{k-1}$ and $|y|>k-1$ which contradicts the above arguments).

If for each vertex $g$, for each vertex $f\in P_{k-1}$ the set $\{a\in E^1:s(a)=f \hbox{ and } r(a)=g\}$ is finite then the set $\{u\in \operatorname{Path}(E):r(u)\in P_{k-2}\}$ is finite as well.
If there exist a vertex $f\in P_{k-2}$ and a vertex $g$ such that the set
$$\{a\in E^1: s(a)=f \quad\hbox{and}\quad r(a)=g\}$$
is infinite then put $e_n=f$, $e_{n+1}=g$ and proof of the claim ends.

If for each vertex $g$, for each vertex $f\in P_{k-2}$ the set $\{a\in E^1:s(a)=f \hbox{ and } r(a)=g\}$ is finite then the set $\{u\in \operatorname{Path}(E):r(u)\in P_{k-3}\}$ is finite as well. We repeat our arguments and either find a desired pair of vertices or obtain that the set $\{u\in \operatorname{Path}(E):r(u)=e_{k-1}\}$ is finite.

If the set $\{u\in \operatorname{Path}(E):r(u)=e_{k-1}\}$ is finite and the set $T_k=\{a_{(k,n)}:n\in\N\}$ is infinite then put $e_n=e_{k-1}$, $e_{n+1}=e_k$ and proof of the claim ends.
If the set $T_k$ is finite we do the next step. Since the set $E^0$ of all vertices of graph $E$ is finite and the set $A$ is infinite we obtain that after a finite amount of steps we find a pair of vertices $e_n$ and $e_{n+1}$ such that the set $\{u\in \operatorname{Path}(E):r(u)=e_n\}$ is finite and the set $\{a\in E^1:s(a)=e_n \hbox{ and } r(a)=e_{n+1}\}$ is infinite.
\end{proof}
Fix those pair of vertices $e_n$ and $e_{n+1}$. Denote $L=\{a\in E^1:s(a)=e_n,r(a)=e_{n+1}\}$.
Let $M=\{a_n:n\in\N\}$ be an arbitrary countable infinite subset of $L$.
Put $$N=\{ua_na_n^{-1}v^{-1}:\hbox{ } u,v\in \operatorname{Path}(E) \hbox{ such that } r(u)=r(v)=e_n\hbox{ and }a_n\in M\}.$$

Define a topology $\tau$ on the semigroup $G(E)$ by the following way:
each non-zero element of $G(E)$ is an isolated point and a family $\mathcal{B}=\{U_n: n\in\N\}$ forms an open neighborhood base of the point $0$, where $U_n=\{ua_ka_k^{-1}v^{-1}\in N: k>n\}\cup\{0\}$.
Since for a fixed element $a_k\in M$ the set $\{ua_ka_k^{-1}v^{-1}:u,v\in \operatorname{Path}(E) \hbox{ such that } r(u)=r(v)=e_n\}$ is finite each basic open neighborhood of $0$ is compact. Hence $(G(E),\tau)$ is a locally compact topological space. By Lemma~\ref{triv}, $(G(E),\tau)$ is a metrizable topological space. The equality $U_n=U_n^{-1}$ provides the continuity of inversion in the semigroup $(G(E),\tau)$. To prove the continuity of the semigroup operation in $(G(E),\tau)$ we need to check it in the following three cases:
\begin{itemize}
\item[$(1)$] $bc^{-1}\cdot 0=0$;
\item[$(2)$] $0\cdot bc^{-1}=0$;
\item[$(3)$] $0\cdot 0=0$.
\end{itemize}
Consider case $(1)$.
We have the following three subcases:
\begin{itemize}
\item[$(1.1)$] $cd=u$ for some pathes $d,u\in\operatorname{Path}(E)$ such that $r(u)=e_n$;
\item[$(1.2)$] $c=ua_{n_0}d$ for some pathes $u,d\in\operatorname{Path}(E)$ such that $r(u)=e_n$ and $a_{n_0}\in M$;
\item[$(1.3)$] otherwise.
\end{itemize}

Consider subcase $(1.1)$. Fix an arbitrary open basic neighborhood $U_k$ of $0$. The continuity of the semigroup operation in $(G(E),\tau)$ can derived from the inclusion $bc^{-1}\cdot U_k\subset U_k$.

Consider subcase $(1.2)$. Fix an arbitrary open basic neighborhood $U_k$ of $0$. The continuity of the semigroup operation in $(G(E),\tau)$ can derived from the inclusion $bc^{-1}\cdot U_{\max\{n_0+1,k\}}=0\in U_k$.

If there exists no elements $d,u\in\operatorname{Path}(E)$ and $a_{n_0}\in M$ such that $r(u)=e_n$ and $cd=u$ or $c=ua_{n_0}d$ then
$bc^{-1}\cdot U_{1}=0\in U_k$.  Hence the semigroup operation in $(G(E),\tau)$ is continuous in case $(1)$.

Continuity of the semigroup operation in $(G(E),\tau)$ in case $(2)$ immediately follows from the continuity of the semigroup operation in $(G(E),\tau)$ in case $(1)$ and from the continuity of the inversion in $(G(E),\tau)$.

Consider case $(3)$. We claim that $U_k\cdot U_k\subseteq U_k$ for each $k\in\N$. Indeed, fix two arbitrary elements $u_1p_np_n^{-1}v_1^{-1}$ and $u_2p_mp_m^{-1}v_2^{-1}$ from $U_k$. Observe that $n>k$ and $m>k$. Then
$$u_1p_np_n^{-1}v_1^{-1}\cdot u_2p_mp_m^{-1}v_2^{-1}=0\in U_k \qquad \hbox{if either} \qquad v_1\neq u_2 \quad \hbox{or}\quad n\neq m$$
and
$$u_1p_np_n^{-1}v_1^{-1}\cdot u_2p_mp_m^{-1}v_2^{-1}=u_1p_np_n^{-1}v_2^{-1}\in U_k\qquad \hbox{if} \qquad v_1=u_2 \quad \hbox{and} \quad n=m.$$
Hence $(G(E),\tau)$ is a metrizable locally compact topological inverse semigroup.
 \end{proof}

\begin{corollary}
There exists a non-discrete locally compact semigroup topology on a graph inverse semigroup $G(E)$ if and only if there exists a non-discrete topology $\tau$ such that $(G(E),\tau)$ is a locally compact metrizable topological inverse semigroup.
\end{corollary}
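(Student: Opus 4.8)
The plan is to observe that this corollary is essentially a bookkeeping consequence of Theorem~\ref{main} together with the explicit construction carried out in its proof, so there is no genuinely new content to establish. The forward implication is immediate: any topology $\tau$ making $(G(E),\tau)$ a locally compact metrizable topological inverse semigroup is in particular a locally compact (and non-discrete) semigroup topology, so it directly witnesses the existence asserted on the left-hand side.

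For the reverse implication I would argue through the characterization rather than by trying to upgrade a given topology. Suppose $G(E)$ admits a non-discrete locally compact semigroup topology. Then the discrete topology is \emph{not} the only locally compact semigroup topology on $G(E)$, so by Theorem~\ref{main} the semigroup $G(E)$ fails the condition $(\star)$. The crucial point is that the $(\Rightarrow)$ direction of the proof of Theorem~\ref{main} does more than merely assert the existence of \emph{some} non-discrete locally compact semigroup topology: in each of the two cases it treats (infinitely many vertices, finitely many vertices) it writes down a concrete topology $\tau$, verifies via Lemma~\ref{triv} that $\tau$ is metrizable, checks that every basic neighborhood $U_n$ of $0$ is compact so that $\tau$ is locally compact, and checks both $U_n=U_n^{-1}$ (continuity of the inversion) and the relevant products (continuity of the multiplication). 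Hence that very construction already produces a non-discrete topology $\tau$ for which $(G(E),\tau)$ is a locally compact metrizable topological inverse semigroup, which is exactly the right-hand side.

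I do not anticipate a real obstacle here; the only subtlety to flag is a logical one. Theorem~\ref{main} as stated concerns arbitrary semigroup topologies, not a priori inverse or metrizable ones, so the argument cannot attempt to improve the given topology in place. Instead, the given non-discrete locally compact semigroup topology is used only as a certificate that $(\star)$ fails, after which the (possibly different) better-behaved topology is produced by the construction in the proof of Theorem~\ref{main}. The corollary's role is thus to package the observation that the failure of $(\star)$ can always be witnessed by a topology that is simultaneously metrizable and compatible with the full inverse-semigroup structure.
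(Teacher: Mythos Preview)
Your proposal is correct and matches the paper's approach: the corollary is stated without proof because it is an immediate consequence of Theorem~\ref{main} and the explicit constructions in its proof, and you have spelled out precisely that reasoning. One cosmetic remark: you have the labels ``forward'' and ``reverse'' swapped relative to the usual convention (the nontrivial direction is from the existence of a non-discrete locally compact \emph{semigroup} topology to the existence of a metrizable \emph{inverse} one), but the content of both directions is handled correctly.
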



Theorem~\ref{main} can be reformulated in the following way:
\begin{theorem}
Discrete topology is the only locally compact semigroup topology on the graph inverse semigroup $G(E)$ if and only if graph $E$ contains a finite amount of vertices and there does not exist a pair of vertices $e,f\in E^0$ such that the set $\{u\in \operatorname{Path}(E): r(u)=e\}$ is finite and the set $\{a\in E^1:s(a)=e \hbox{ and } r(a)=f\}$ is infinite.
\end{theorem}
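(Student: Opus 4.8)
The plan is to deduce this reformulation directly from Theorem~\ref{main}. By that theorem, the discrete topology is the only locally compact semigroup topology on $G(E)$ if and only if $G(E)$ satisfies the condition $(\star)$, so it suffices to verify that $(\star)$ is equivalent to the stated graph-theoretic condition: namely that $E^0$ is finite and there is no pair of vertices $e,f$ (call such a pair \emph{bad}) for which $\{u\in\operatorname{Path}(E):r(u)=e\}$ is finite while $\{a\in E^1:s(a)=e,\ r(a)=f\}$ is infinite. I would prove the two implications of this equivalence separately and then invoke Theorem~\ref{main}.

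The implication "$[\,E^0$ finite and no bad pair$\,]\Rightarrow(\star)$" is already contained in the $(\Rightarrow)$ part of the proof of Theorem~\ref{main}. Indeed, that argument shows, by contraposition, that whenever $(\star)$ fails one of two things occurs: either $E^0$ is infinite (its case $(1)$), or $E^0$ is finite and the Claim produces precisely a bad pair (its case $(2)$). Reading this contrapositively gives exactly "$[\,E^0$ finite and no bad pair$\,]\Rightarrow(\star)$", so nothing new is required here beyond citing Remark~\ref{rem} and the Claim.

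For the converse, "$(\star)\Rightarrow[\,E^0$ finite and no bad pair$\,]$", finiteness of $E^0$ is exactly Remark~\ref{rem}. To rule out a bad pair I would argue by contraposition and exhibit an explicit witness to the failure of $(\star)$. Given a bad pair $(e,f)$, I use finiteness of $\{u\in\operatorname{Path}(E):r(u)=e\}$ to pick a path $u_0$ of maximal length with $r(u_0)=e$, and I fix a countably infinite family $\{a_n\}_{n\in\N}$ of edges with $s(a_n)=e$ and $r(a_n)=f$. I then set $A=\{u_0a_n:n\in\N\}\subset\operatorname{Path}(E)$, a family of paths all of length $|u_0|+1$. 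Suppose $(\star)$ held for $A$: there would exist $\mu=pq^{-1}\in G(E)$ and infinitely many $k$ with $\mu\cdot u_0a_{n_k}\in\operatorname{Path}(E)$ and $|\mu\cdot u_0a_{n_k}|>|u_0|+1$. Analysing the product formula, the alternative cases (where $q=u_0a_{n_k}w$, or where the result is $0$) can hold for at most one index $k$ each, since the edges $a_{n_k}$ are distinct; hence for infinitely many $k$ one must have $u_0a_{n_k}=qw_k$ with $q$ a fixed prefix of $u_0$, and then $\mu\cdot u_0a_{n_k}=pw_k$. Writing $u'=p\,s_0$, where $s_0$ is the suffix of $u_0$ with $u_0=qs_0$, this $u'$ has range $e$ and $\mu\cdot u_0a_{n_k}=u'a_{n_k}$, so the length condition forces $|u'|>|u_0|$, contradicting the maximality of $u_0$. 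Thus $A$ witnesses $\neg(\star)$, which completes the equivalence and, via Theorem~\ref{main}, the theorem.

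The main obstacle is precisely this last construction. The naive choice $A=\{a_n\}$ (the edges themselves) need not witness $\neg(\star)$: if $e$ admits an incoming path $p$ of positive length, then $\mu=pe^{-1}$ already lengthens every $a_n$, so $(\star)$ could still hold on $\{a_n\}$. The remedy is to front-load a \emph{maximal} incoming path $u_0$, which is legitimate exactly because the bad pair guarantees $\{u\in\operatorname{Path}(E):r(u)=e\}$ is finite; the crux is that any amplifier $\mu$ must, on infinitely many elements of $A$, act by left concatenation and thereby manufacture a path into $e$ strictly longer than $u_0$, which is impossible. The remaining care is bookkeeping with the product formula of $G(E)$: one must confirm that the cases in which the fixed prefix $q$ overlaps the varying last edge $a_{n_k}$, or in which the product degenerates to $0$, account for at most one value of $k$ apiece and therefore cannot supply the required infinitely many indices.
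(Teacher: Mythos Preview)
Your argument is correct and, like the paper, leans on Theorem~\ref{main} together with the Claim inside its proof; the one genuine difference is in how you treat the implication ``bad pair $\Rightarrow$ non-discrete locally compact topology exists''. The paper does not go back through condition~$(\star)$ here: it simply notes that the explicit topology constructed in the $(\Rightarrow)$ part of Theorem~\ref{main} (with base $U_n=\{ua_ka_k^{-1}v^{-1}:k>n\}\cup\{0\}$) uses only the data of a bad pair, and invokes that construction verbatim. You instead give a new, self-contained argument that a bad pair forces $\neg(\star)$ --- via the witness $A=\{u_0a_n\}$ with $u_0$ a path of maximal length into $e$ --- and then let Theorem~\ref{main} manufacture the topology. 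Your route has the advantage of isolating the purely combinatorial equivalence ``$(\star)\Leftrightarrow[E^0$ finite and no bad pair$]$'' before any topology enters; the paper's route avoids the product-formula case analysis by recycling an already-verified construction. One minor wording issue: the claim that ``the result is $0$'' can hold for at most one $k$ is not literally true, but it is also irrelevant, since you are already restricted to the infinite subset on which $\mu\cdot u_0a_{n_k}\in\operatorname{Path}(E)$; the only genuine exclusion needed is $|q|=|u_0|+1$, which indeed fixes a single $k$.
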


\begin{proof}
If the set $E^0$ of all vertices of graph $E$ is infinite or there exists a pair a vertices $e,f\in E^0$ such that the set $\{u\in \operatorname{Path}(E): r(u)=e\}$ is finite and the set $\{a\in E^1:s(a)=e \hbox{ and } r(a)=f\}$ is infinite then we can introduce a non-discrete locally compact metrizable inverse semigroup topology $\tau$ on the semigroup $G(E)$ by the same way as in proof of the Theorem~\ref{main}.

If there exists a non-discrete locally compact semigroup topology on the graph inverse semigroup $G(E)$ then by Theorem~\ref{main} semigroup $G(E)$ does not satisfy the condition $(\star)$. Hence by the proof of the Theorem~\ref{main} either the set of all vertices of graph $E$ is infinite or graph $E$ contains a pair of vertices $e,f\in E^0$ such that the set $\{u\in \operatorname{Path}(E): r(u)=e\}$ is finite and the set $\{a\in E^1:s(a)=e \hbox{ and } r(a)=f\}$ is infinite.
\end{proof}



\end{document}